\DeclareFontFamily{U}{wncy}{}
\DeclareFontShape{U}{wncy}{m}{n}{<->wncyr10}{}
\DeclareSymbolFont{mcy}{U}{wncy}{m}{n}
\DeclareMathSymbol{\Sha}{\mathord}{mcy}{"58}
\DeclareMathOperator{\sgn}{sgn}
\DeclareMathOperator{\SL}{SL}
\DeclareMathOperator{\SLT}{\SL_{2}}
\DeclareMathOperator{\HYP}{HYP}
\DeclareMathOperator{\Quad}{Quad}
\DeclareMathOperator{\Nm}{Nm}
\newcounter{ctfig}
\newcommand{\Z}{{\mathbb Z}}
\newcommand{\Q}{{\mathbb Q}}
\newcommand{\R}{{\mathbb R}}
\newcommand{\B}{\mathcal{B}}
\newcommand{\oV}{\overline{V}}
\newcommand{\OO}{\mathcal{O}}
\newcommand{\E}{\mathcal{E}}
\newcommand{\A}{{\mathbb A}}
\newcommand{\bs}{\beta^{\ast}}
\newcommand{\CC}{\mathbb C}
\DeclareMathOperator{\FP}{FP}
\newcommand{\PP}{[b_1,\ldots , b_N,\overline{a_1,\ldots ,a_k}]}
\newcommand{\p}{(b_1, \ldots , b_N, a_1, \ldots  a_k)}
\theoremstyle{plain}
\newtheorem{thm}{Theorem}[section]
\newtheorem{hyp}[thm]{Hypothesis}
\newtheorem{lemma}[thm]{Lemma}
\newtheorem{prop}[thm]{Proposition}
\newtheorem{cor}[thm]{Corollary}
\theoremstyle{definition}
\newtheorem{remark}[thm]{Remark}
\newtheorem{defn}[thm]{Definition}
\begin{document}
%% \foreach \x in{graphics,floats}{%
%%     \immediate\write18{pdflatex -jobname=template-\x\space "\def\noexpand\placeholder{\x} \noexpand\input{template}"}%
%%     \includepdf[pages=-]{template-\x}%
%% }
\bibliographystyle{plain}
\bibstyle{plain}

\title[Integral points arising from periodic continued fractions]
{The Zariski closure of integral points on varieties parametrizing
periodic continued fractions}

\author{Bruce W. Jordan \and Adam Logan \and Yevgeny Zaytman}

\address{Department of Mathematics, Box B-630, Baruch College,
The City University of New York, One Bernard Baruch Way, New York
NY 10010}
\email{bruce.jordan@baruch.cuny.edu}

\address{The Tutte Institute for Mathematics and Computation,
  P.O. Box 9703, Terminal, Ottawa, ON K1G 3Z4, Canada}
\address{School of Mathematics and Statistics, 4302 Herzberg
  Laboratories, 1125 Colonel By Drive, Ottawa, ON K1S 5B6, Canada}
\email{adam.m.logan@gmail.com}

\address{Center for Communications Research, 805 Bunn Drive,
Princeton, NJ 08540}
\email{ykzaytm@idaccr.org}

\subjclass[2010]{11A55, 11D72  }
\keywords{periodic continued fractions, integral points}

\begin{abstract}
Let $R$ be the ring of $S$-integers in a number field $K$.  Let
$\B=\{\beta, \bs\}$ be the multiset of roots of a nonzero quadratic
polynomial over $R$.  There are varieties $V(\B)_{N,k}$ defined over
$R$ parametrizing periodic continued fractions $\PP$ for $\beta$ or
$\bs$.  We study the $R$-points on these varieties, finding
contrasting behavior according to whether groups of units are infinite
or not.  If $R$ is the rational integers or the ring of integers in an
imaginary quadratic field, we prove that the $R$-points of
$V(\B)_{N,k}$ are not Zariski dense. On the other hand, suppose that
$\beta\not\in K\cup\{\infty\}$, $R^\times$ is infinite, and that there
are infinitely many units in the (left) order $R_\beta$ 
of $\beta R+R\subseteq K(\beta)$ with norm to $K$ equal
to $(-1)^k$.  Then we prove that the $R$-points on $V(\B)_{1,k}$ are
Zariski dense for $k\geq 8$ and the $R$-points on $V(\B)_{0,k}$ are
Zariski dense for $k\geq 9$. We also prove that 
$V(\B)_{1,k}$ and $V(\B)_{0,k}$ are $K$-rational 
irreducible varieties
for $k$ sufficiently large.
\end{abstract}

\maketitle

\section{Introduction}
\label{taco}

This work is the third in a series of papers 
(the first two being Brock-Elkies-Jordan \cite{bej} and 
Jordan-Zaytman \cite{jz}) recasting 
periodic continued fractions
as a problem in both diophantine geometry
and arithmetic groups---an attempt at a modern take on a very
classical subject.
Let $K$ be a number field and let $S$ be a finite set of primes of $K$
containing the archimedean ones.  Let $R$ be the ring of $S$-integers
of $K$. To a
a periodic continued fraction $\PP$ with partial
quotients in $R$ (an {\sf  $R$-PCF })
is associated \cite[Defn.~2.4]{bej} a quadratic polynomial
$0\neq Q(x):=\Quad_{N,k}(P)(x)\in R[x]$ with $P$ formally a root
obtained by expanding the equality
\[
\PP=[b_1,\ldots, b_N, a_1,\ldots , a_k,\overline{a_1,\ldots, a_k}].
\]
Now let  $\B=\{\beta, \beta^\ast\}$ be the multiset of roots
of $0\neq Q(x)=Ax^2+Bx+C\in R[x]$.  To this data 
together with a {\sf type} $(N,k)$ comprised of  integers $N\geq 0$, $k\geq 1$ 
\cite[Sect.~3]{bej}
associate an affine {\sf PCF variety} 
$V(Q)_{N,k}=V(\B)_{N,k} \subseteq\A^{N,k}\colonequals
\A^N\times \A^k$ over $R$,
generically of dimension $N+k-2$,
parametrizing periodic continued fractions ($R$-PCFs)
$P=[b_1,\ldots, b_N, \overline{a_1, \ldots, a_k}]$ having 
$\Quad_{N,k}(P)=Q(x)$.
There is a one-to-one correspondence between points
$p=p(P)=\p\in\A^{N,k}$ and PCFs
$P=P_{N,k}(p)=\PP$ of type $(N,k)$.

The variety $V(Q)_{N,k}=V(\B)_{N,k}$ is
 defined using Euler's continuant polynomials;
if 
$P$ is an $R$-PCF
with $\Quad_{N,k}(P)=Q(x)$,
then $p(P)\in V(\B)_{N,k}(R)$.
If $\alpha\in R$ but $\beta=\sqrt{\alpha}\notin R$ (so that
$Q(x)=x^2-\alpha\in R[x]$ is irreducible),  we shorten
the notation to $V(\alpha)_{N,k}:=V(Q)_{N,k}=V(\{\beta,-\beta\})_{N,k}$.
Set $D(a):=\left[\begin{smallmatrix}a & 1\\1 & 0\end{smallmatrix}\right]$
and $t=\left[\begin{smallmatrix}0 & 1\\1 & 0\end{smallmatrix}\right]$.
For the point $p=(b_1, \ldots , b_N, a_1,\ldots ,  a_k)\in V(\B)_{N,k}(R)$, 
the matrix $E:=E_{N,k}(p)=E(P)=E_{N,k}(P)$
from \cite{bej}*{Defn.~2.4} plays a key role:
\begin{equation}
\label{gwen}
E=\begin{bmatrix}E_{11}& E_{12}\\E_{21}& E_{22}\end{bmatrix}=
D(b_1)\cdots D(b_N)D(a_1)\cdots D(a_k)tD(-b_N)\cdots D(-b_1)t.
\end{equation}
We have 
\begin{align}
\label{gwen1}
\nonumber \Quad_{N,k}(P)& =E_{21}x^2+(E_{22}-E_{11})x-E_{12} ,\\
E\begin{pmatrix}\beta\\1\end{pmatrix}& =(E_{21}\beta+E_{22})
\begin{pmatrix}\beta\\1\end{pmatrix},\quad\text{and}\\
\nonumber
E\begin{pmatrix}\beta^\ast\\1\end{pmatrix}&=
(E_{21}\beta^\ast+E_{22})
\begin{pmatrix}\beta^\ast\\1\end{pmatrix}.
\end{align}

In this paper we study the Zariski closure of $V(\B)_{N,k}(R)$
in $V(\B)_{N,k}(\CC)$. We find opposite behavior in cases where
the unit group $R^\times$ is finite or infinite.
In Section \ref{sec:k-eq-q} we consider
$R=\Z$ and prove that the $\Z$-points
are never dense.  We extend this result in Section \ref{sec:k-imag-quad}
to the ring of 
integers in an imaginary quadratic
field.  Finally we consider the case where $R^\times$ is infinite,
$\beta\notin K\cup\{\infty\}$, and 
$N=0,1$.
Let $R_{\beta}\subset K(\beta)$  be the (left) order 
of the $R$-lattice $L_{\beta}=\beta R+R$, cf. Definition \ref{pistol1}.
 Supposing additionally
that there are 
infinitely many units $u\in R_\beta$ with norm $\Nm_{K(\beta)/K}(u) = (-1)^k$,
we prove that  
in $V(\B)_{1,k}(R)$ is Zariski dense for $k\geq 8$.
Assuming the Generalized Riemann Hypothesis as 
in \cite{jz1}*{Sect.~3D}, we prove that $V(\B)_{1,k}(R)$ is
Zariski dense for $k\geq 4$ if $S$ contains a real archimedean prime, 
for $k\geq 5$ if $S$
contains a finite prime, and for $k\geq 6$ in general. 
We prove these results by fibering the variety $V(\B)_{1,k}$ over the 
Fermat-Pell rational curve and then 
analyzing the fibers of this map using
the main results
of \cite{jz}. This also allows us
to determine when the varieties $V(\B)_{1,k}$ and $V(\B)_{0,k}$
are irreducible rational varieties over $K$; see Theorem \ref{queen}
and Remark \ref{pizza}.

\section{The case \texorpdfstring{$R=\Z$}{R=Z}}\label{sec:k-eq-q}

It is  not hard to prove that the $\Z$-points on $V(\B)_{N,k}$ are not Zariski
dense for arbitrary $N$ and $k$ with $N+k>2$.  
This follows from a classical result
on nearest-integer continued fractions \cite{h}*{Sect.~2} ,
which we state using modern notation:

\begin{thm}
\label{Hurwitz}
Every $\alpha\in\R$ has a unique expression in one
of the two forms $[c_1, c_2,\ldots, c_N]$, $[c_1, c_2,\ldots ]$ with all
$c_i\in \Z$ satisfying the 
conditions:
\begin{enumerate}[\upshape (a)]
\item
\label{Hurwitz1}
$|c_i|\geq 2$ for $i>1$.
\item
\label{Hurwitz2}
If $c_i=\pm2$ for $i>1$, then
  $\sgn(c_i)=\sgn(c_{i+1})$.
\item
\label{Hurwitz3}
In case the form is $[c_1, c_2,\ldots c_N]$, then $c_N\neq -2$.
\end{enumerate}
\end{thm}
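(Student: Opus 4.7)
The plan is to establish existence and uniqueness by running the nearest-integer algorithm and then verifying both that its output satisfies (a)--(c) and that these conditions alone pin down the expansion.

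\emph{Existence.} Set $\alpha_1 := \alpha$; at each step, define $c_i$ as the unique integer with $\alpha_i - c_i \in (-1/2,\,1/2]$ (nearest integer, with ties broken toward $-\infty$), terminating if $\alpha_i = c_i$ and otherwise setting $\alpha_{i+1} := 1/(\alpha_i - c_i)$. Since $|\alpha_i - c_i| \leq 1/2$, iteration produces $\alpha_{i+1} \in [2,\infty) \cup (-\infty, -2)$, and in particular $\alpha_{i+1} = -2$ is impossible for $i \geq 1$; this yields (a), and gives (c) because $c_N = -2$ for $N \geq 2$ would force $\alpha_N = -2$. For (b), $c_{i+1} = +2$ forces $\alpha_{i+1} \in [2,\,5/2]$, so when $\alpha_{i+2}$ exists it lies in $[2,\infty)$ and hence $c_{i+2} \geq 2$; the case $c_{i+1} = -2$ is symmetric.

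\emph{Uniqueness.} Suppose $[c_1, c_2, \ldots]$ and $[c_1', c_2', \ldots]$ both satisfy (a)--(c) and represent $\alpha$. By induction it suffices to prove $c_1 = c_1'$. Let $\tau$, $\tau'$ denote the values of the tails $[c_2, c_3, \ldots]$ and $[c_2', c_3', \ldots]$. The key claim, provable by induction on tail length, is that any tail satisfying the appropriate shifted versions of (a)--(c) has value in $[2,\infty) \cup (-\infty, -2)$, with $\tau = -2$ excluded by (c) (for the single-term tail $[-2]$) or by (b) (for a length-$\geq 2$ tail beginning with $-2$, where (b) forces the continuation to remain negative and pushes $\tau$ strictly below $-2$). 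Consequently $1/\tau,\, 1/\tau' \in (-1/2,\,1/2]$, so $\alpha - c_1$ and $\alpha - c_1'$ both lie in a half-open interval of length one, forcing $c_1 = c_1'$.

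The main obstacle is the boundary case $|\alpha_i - c_i| = 1/2$, which is precisely where two distinct nearest-integer expansions are formally available; conditions (b) and (c) are engineered to disqualify exactly one of them. Verifying this disqualification carefully across the sign-dependent subcases is the heart of the argument, after which the rest reduces to routine induction.
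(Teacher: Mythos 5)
Your uniqueness argument is essentially the paper's: the same key lemma (a tail satisfying the shifted conditions has value of absolute value at least $2$ and sign equal to that of its first entry, with the value $-2$ excluded via (b) and (c)), followed by the observation that $c_1$ is then forced to be the nearest integer to $\alpha$. The existence half you sketch is omitted by the paper, which states it only needs uniqueness; your algorithmic verification of (a)--(c) is correct, modulo the tacit convention (also needed for the theorem to be literally true at $\alpha=-2$) that condition (c) does not constrain $c_1$ when $N=1$.
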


\begin{cor}
\label{at most 1}
For each $\alpha\in\R , N, k$ there exists at most 
one $\Z$-PCF
\[
\alpha=[c_1,\dots, c_{N},\overline{c_{N+1}, \ldots, c_{N+k}}]
\]
with
$|c_i|>2$ for $1\leq i \leq N+k$.
\end{cor}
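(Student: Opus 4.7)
The plan is to deduce the corollary directly from Theorem \ref{Hurwitz}. Given any $\Z$-PCF $\alpha=[c_1,\dots,c_N,\overline{c_{N+1},\dots,c_{N+k}}]$ of the stated form, I would first unroll the period to obtain an infinite simple continued fraction expansion $\alpha = [c_1, c_2, c_3, \ldots]$ with $c_{N+j+mk} = c_{N+j}$ for $1 \leq j \leq k$ and $m \geq 0$. The hypothesis $|c_i| > 2$ forces $|c_i| \geq 3$ for every index $i$.

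Next, I would verify that this infinite expansion satisfies the Hurwitz normalization conditions of Theorem \ref{Hurwitz}. Condition (a) is immediate from $|c_i| \geq 3 \geq 2$. Condition (b) is vacuous because no $c_i$ equals $\pm 2$. Condition (c) does not apply, since the unrolled expansion is infinite (as $k \geq 1$ and the period repeats indefinitely). Hence the unrolled sequence is the unique Hurwitz expansion of $\alpha$ guaranteed by Theorem \ref{Hurwitz}.

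Finally, since the type $(N,k)$ is fixed in the statement, the original PCF is completely recovered by reading off its first $N+k$ partial quotients. So if two PCFs of the form in the statement represented the same $\alpha$, their unrolled expansions would coincide by Hurwitz uniqueness, and the PCFs themselves would then agree term-by-term, proving the corollary. There is no substantive obstacle: the result is essentially a direct specialization of Hurwitz uniqueness, once one observes that the hypothesis $|c_i| \geq 3$ is strictly stronger than Hurwitz admissibility.
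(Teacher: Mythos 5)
Your proof is correct and matches the paper's intent: the corollary is stated there without a separate argument, as an immediate consequence of the uniqueness in Theorem \ref{Hurwitz}, which is exactly the specialization you carry out (unroll the period, note $|c_i|>2$ forces $|c_i|\ge 3$ so the Hurwitz conditions hold vacuously, and invoke uniqueness of the infinite expansion).
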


Since we only use the uniqueness part of Theorem \ref{Hurwitz}, we
will give a proof below, but first we need a lemma.

\begin{lemma}
\label{interval}
Let $\alpha=[c_1,\ldots ,c_N]$ with $c_i\in\Z$ and possibly $N=\infty$
satisfying the conditions:
\begin{enumerate}[\upshape (a)]
\item
\label{interval1}
$|c_i|\geq 2$.
\item
\label{interval2}
If $c_i=\pm2$ for $i\ne N$, then
  $\sgn(c_i)=\sgn(c_{i+1})$.
\end{enumerate}
Then $1/\alpha \in [-\frac12,\frac12]$ and
$\sgn(\alpha)=\sgn(c_1)$.

\end{lemma}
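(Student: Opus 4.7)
The plan is to prove the lemma by induction on $N$. The base case $N = 1$ is immediate: $\alpha = c_1$ with $|c_1| \geq 2$ gives $1/\alpha \in [-\frac12, \frac12]$ and $\sgn(\alpha) = \sgn(c_1)$. For the inductive step with finite $N \geq 2$, set $\alpha' = [c_2, \ldots, c_N]$. The tuple $(c_2, \ldots, c_N)$ inherits both hypotheses of the lemma (condition (a) is trivially inherited, and condition (b) for indices $2 \leq i < N$ is a subset of what was originally assumed). The inductive hypothesis therefore gives $1/\alpha' \in [-\frac12, \frac12]$ and $\sgn(\alpha') = \sgn(c_2)$, whence $|\alpha'| \geq 2$.

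Now $\alpha = c_1 + 1/\alpha'$, and we split into two cases. If $|c_1| \geq 3$, the triangle inequality gives $|\alpha| \geq |c_1| - |1/\alpha'| \geq 5/2$, and since $|1/\alpha'| \leq 1/2 < |c_1|$, we have $\sgn(\alpha) = \sgn(c_1)$. If $|c_1| = 2$, then since $N \geq 2$ we may apply condition (b) at $i = 1$ to get $\sgn(c_1) = \sgn(c_2) = \sgn(\alpha') = \sgn(1/\alpha')$; thus $c_1$ and $1/\alpha'$ have the same sign, so $|\alpha| = |c_1| + |1/\alpha'| > 2$ and $\sgn(\alpha) = \sgn(c_1)$. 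In both subcases $1/\alpha \in [-\frac12, \frac12]$ and the sign assertion holds, completing the inductive step.

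For $N = \infty$, the continued fraction converges to a real number $\alpha$ by the classical theorem on convergence of continued fractions with partial quotients bounded away from zero (since $|c_i| \geq 2$, the denominators $q_n$ of the convergents satisfy $|q_n| \geq |q_{n-1}| + |q_{n-2}|$ up to sign, so they grow at least like Fibonacci). Applying the finite case to each truncation $[c_1, \ldots, c_n]$ yields $|[c_1, \ldots, c_n]| \geq 2$ and $\sgn([c_1, \ldots, c_n]) = \sgn(c_1)$; passing to the limit preserves both properties (the sign is preserved because $|\alpha| \geq 2$ is bounded away from zero in the limit).

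No step presents a genuine obstacle. The only minor points requiring care are verifying that the tail $(c_2, \ldots, c_N)$ actually inherits condition (b) (which it does because the condition is indexed only up to position $N-1$ of the tail) and invoking convergence in the $N = \infty$ case.
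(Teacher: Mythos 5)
Your proof is correct and follows essentially the same route as the paper's: induction on finite $N$ with the two cases $|c_1|\geq 3$ and $|c_1|=2$ (using condition (b) to align the signs of $c_1$ and $1/\alpha'$ in the latter), then passing to the limit for $N=\infty$. You supply slightly more detail than the paper on why the truncations inherit the hypotheses and why the sign survives the limit, but the argument is the same.
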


\begin{proof}
First assume $N$ is finite and proceed by induction on $N$.  If $N=1$
then the result is obvious.  Now suppose the result holds for
$\alpha'=[c_2,\ldots,c_N]$.  Then $\alpha=c_1+1/\alpha'$.  So
if $\lvert c_1 \rvert\ge3$, we have $\lvert c_1+ 1/\alpha'
\rvert\ge 3- 1/2>2$; and if $\lvert c_1 \rvert=2$ we have
$\sgn(c_1)=\sgn(\alpha')=\sgn(1/\alpha')$ so $\lvert
c_1+ 1/\alpha' \rvert>\lvert c_1\rvert=2$.  In either case we
have $\sgn(c_1)=\sgn(\alpha)$.

The case with $N=\infty$ follows by taking limits.
\end{proof}

\begin{proof}[Proof of uniqueness in Theorem \textup{\ref{Hurwitz}}]
Let $\alpha=[c_1, c_2,  c_3,\ldots]=[d_1, d_2, d_3,\ldots]$ be a
counterexample chosen such that the first $i$ with $c_i\ne d_i$ is
minimal among all counterexamples.  Note that we must have $i=1$ since
otherwise $[c_2, c_3, \ldots]=[d_2, d_3,\ldots]$ would be a counterexample
with smaller $i$.  Now let $\alpha'=[c_2, c_3, \ldots]$, so
$\alpha=c_1+1/\alpha'$.  By Lemma \ref{interval},
$1/\alpha'\in[-1/2, 1/2]$.
We cannot  have $1/\alpha'=-1/2$, since then $\alpha'=-2$ and $c_2=-2$,
contradicting condition Theorem \ref{Hurwitz}(\ref{Hurwitz3}).  
Hence $1/\alpha'\in
(-1/2, 1/2]$.
Therefore, with $\lfloor \,\,\,\rceil$ denoting the
nearest integer function, 
$c_1=\lfloor\alpha\rceil$ and similarly $d_1=\lfloor\alpha\rceil$,
contradicting the assumption that $c_1\ne d_1$.
\end{proof}

Prior to proving our first theorem, we recall an 1865 theorem
of Worpitzky:
\begin{thm}
\label{pistol}
\textup{(Worpitzky \cite{wor}, cf. \cite{j-t}*{Cor.~4.36}.)}
Let $C=[c_1,c_2,\ldots , c_n, \ldots]$ be a continued fraction
with $c_i\in\CC$, $i\geq 1$.  If $|c_nc_{n+1}|\geq 4$ for
$n\geq 1$, then $C$ converges.
\end{thm}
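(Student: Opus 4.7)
The plan is to reduce Theorem~\ref{pistol} to the classical Worpitzky convergence criterion as stated in \cite{j-t}*{Cor.~4.36}: any continued fraction $\cfrac{a_1}{1+\cfrac{a_2}{1+\cfrac{a_3}{1+\cdots}}}$ with $a_n\in\CC$ and $|a_n|\leq 1/4$ converges to a value in the closed disc $\{w\in\CC:|w|\leq 1/2\}$. Since the hypothesis $|c_nc_{n+1}|\geq 4$ forces $c_n\neq 0$ for all $n$, the relation $C=c_1+1/F$ with
\[
F:=\cfrac{1}{c_2+\cfrac{1}{c_3+\cfrac{1}{c_4+\cdots}}}
\]
makes convergence of $C$ equivalent to convergence of $F$ (to a nonzero value), so it suffices to analyze $F$.

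Next I would apply the standard equivalence transformation to $F$ with scaling sequence $r_0=1$ and $r_n=1/c_{n+1}$ for $n\geq 1$, which multiplies the $n$th partial denominator by $r_n$ and the $n$th partial numerator by $r_{n-1}r_n$. The resulting equivalent continued fraction has all partial denominators equal to $1$ and partial numerators
\[
a_1=1/c_2,\qquad a_n=1/(c_nc_{n+1})\text{ for }n\geq 2.
\]
By hypothesis, $|a_n|\leq 1/4$ for every $n\geq 2$.

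The final step is to apply classical Worpitzky to the \emph{deeper tail} obtained by dropping the $n=1$ term: this is a continued fraction whose partial numerators all have modulus at most $1/4$, so it converges to some $w\in\CC$ with $|w|\leq 1/2$. Therefore $|1+w|\geq 1/2>0$, and the transformed continued fraction evaluates to the finite nonzero number $(1/c_2)/(1+w)$. Undoing the equivalence transformation recovers the same value as the limit of $F$, so $F$ converges, and hence $C=c_1+1/F$ converges as required.

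The principal subtlety is that the first partial numerator $a_1=1/c_2$ need not satisfy $|a_1|\leq 1/4$, since the hypothesis only constrains products of \emph{consecutive} partial quotients $c_ic_{i+1}$. This is precisely why classical Worpitzky must be applied not to the transformed $F$ itself but to its deeper tail starting at $n=2$; the accompanying value-set bound $|w|\leq 1/2$ then supplies the nonvanishing of $1+w$ needed to evaluate the outer M\"obius step $w\mapsto a_1/(1+w)$ and recover convergence of the full continued fraction.
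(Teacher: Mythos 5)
The paper does not actually prove this statement---it is quoted as a classical result with a citation to Worpitzky via \cite{j-t}*{Cor.~4.36}---and your argument is precisely the standard reduction to that cited corollary: an equivalence transformation turning the tail $[c_2,c_3,\ldots]$ into a continued fraction with unit partial denominators and partial numerators $1/(c_nc_{n+1})$, an application of classical Worpitzky to the deeper tail where the $1/4$ bound is actually available, and the value-set bound $|w|\le 1/2$ to justify the final M\"obius step that absorbs the one uncontrolled numerator $1/c_2$; this is correct, including the point you rightly isolate as the only subtlety. The single blemish is notational: with $F$ as you display it (the leading reciprocal already included), the correct relation is $C=c_1+F$ rather than $C=c_1+1/F$, but this does not affect the substance, since either way convergence of $C$ reduces to convergence of the tail, which you establish.
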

%\noindent We note with amusement that Worpitzky was a high-school teacher
%at the renowned \mbox{Friedrichswerdisches} Gymnasium Berlin and Theorem 
%\ref{pistol} was first published in his school's {\em Jahresbericht}.
\begin{thm}
\label{radishes}
Suppose $\B=\{\beta, \beta^{\ast}\}$ is the multiset of
roots of a quadratic polynomial over $\Z$ with $\beta\notin \Q\cup\{\infty\}$.
For any
$N\geq 0$, $k\geq 1$ with $N+k>2$, the $\Z$-points on 
$V(\B)_{N,k}$ are degenerate,
i.e., they are not Zariski dense.
\end{thm}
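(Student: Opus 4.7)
I would combine Worpitzky's theorem (Theorem \ref{pistol}) with the uniqueness of Hurwitz-style expansions in Corollary \ref{at most 1} to split $V(\B)_{N,k}(\Z)$ into two parts: a finite set of ``nearest-integer'' $\Z$-PCF representations, and a set confined to finitely many coordinate hyperplane sections of $V(\B)_{N,k}$.

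Let $p=(b_1,\dots,b_N,a_1,\dots,a_k)\in V(\B)_{N,k}(\Z)$ with associated $\Z$-PCF $P=[b_1,\dots,b_N,\overline{a_1,\dots,a_k}]$. Suppose first that every partial quotient $c_i$ satisfies $|c_i|\ge 3$; then $|c_n c_{n+1}|\ge 9\ge 4$, and Theorem \ref{pistol} guarantees that $P$ converges to some $\gamma\in\R$. The tail of $P$ is fixed by the M\"obius transformation associated with $E$, so by \eqref{gwen1} it is a root of $Q$; the initial transformation then sends it to $\gamma$, which is also a root of $Q$, i.e., $\gamma\in\{\beta,\beta^{\ast}\}$. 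Both roots are irrational since $\beta\notin\Q$, so Corollary \ref{at most 1}, applied separately with $\alpha=\beta$ and $\alpha=\beta^{\ast}$, gives at most one such $p$ per value of $\gamma$, hence at most two altogether.

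Every other $\Z$-point has some coordinate in $\{-2,-1,0,1,2\}$, so
\[
V(\B)_{N,k}(\Z)\ \subseteq\ F\ \cup\ \bigcup_{i=1}^{N+k}\bigcup_{c=-2}^{2}\bigl(V(\B)_{N,k}\cap H_{i,c}\bigr),
\]
where $|F|\le 2$ and $H_{i,c}\subseteq\A^{N+k}$ denotes the coordinate hyperplane fixing the $i$-th coordinate at $c$. Since $V(\B)_{N,k}$ has generic dimension $N+k-2\ge 1$ (as $N+k>2$), the theorem follows as soon as each $V(\B)_{N,k}\cap H_{i,c}$ is a proper closed subvariety: the Zariski closure of the left-hand side is then a finite union of proper subvarieties together with a finite set, and hence is not all of $V(\B)_{N,k}$.

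The main obstacle is therefore the properness of the hyperplane sections, which amounts to showing that no coordinate function on $V(\B)_{N,k}$ is identically equal to any element of $\{-2,-1,0,1,2\}$. Using the matrix description \eqref{gwen}, fixing a single partial quotient to a constant specializes $V(\B)_{N,k}$ to a variety on $N+k-1$ remaining parameters whose generic dimension drops by one, so properness is immediate from dimension counts in the generic situation. In any residual case in which some irreducible component of $V(\B)_{N,k}$ might lie entirely inside an $H_{i,c}$, one exhibits a $\CC$-point of that component with $i$-th coordinate outside $\{-2,-1,0,1,2\}$ by choosing the coordinate $c_i$ generically in $\CC$ and solving the three defining equations $E_{21}=A$, $E_{22}-E_{11}=B$, $E_{12}=-C$ from \eqref{gwen1} for the remaining coordinates.
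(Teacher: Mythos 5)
Your proposal is correct and takes essentially the same route as the paper's proof: Worpitzky's theorem together with the uniqueness statement of Corollary \ref{at most 1} shows there are at most two $\Z$-points with all partial quotients of absolute value greater than $2$, and the remaining points are absorbed into the finite union of hyperplane sections $V(\B)_{N,k}\cap(x_j=c)$ with $|c|\le 2$, whose properness the paper likewise asserts with only brief justification. The only cosmetic difference is that you spell out why the limit of the convergent PCF must be $\beta$ or $\beta^{\ast}$ and spend more words on the properness of the sections; the substance is identical.
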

\begin{proof}
If $N+k>2$, then $\dim V(\B)_{N,k}\geq 1$.
First note that the set 
\[
\mathbf{S}\colonequals \{(c_1,\ldots ,
c_N, \overline{c_{N+1}, \ldots, c_{N+k}})\in V(\B)_{N,k}(\Z)\mid
|c_i|\leq 2\text{ for some $i$, } 1\leq i\leq N+k\}
\]
is not Zariski dense in $V(\B)_{N,k}$.  This is because
$S_{j,k}\colonequals V(\B)_{N,k}\cap (x_j=k)$ is a proper subvariety of 
$V(\B)_{N,k}$ for $1\leq j\leq N+k$ and $k=0,\pm 1,\pm 2$.  Hence 
$S=\cup_{j,k}S_{j,k}$ is a proper subvariety and 
$\mathbf{S}=S(\Z)$.

We now remark that any PCF
\begin{equation}
\label{grisly}
P=[c_1, \ldots, c_{N}, \overline{c_{N+1}, \ldots , c_{N+k}}]
\end{equation}
with $c_i\in\Z$, $|c_i|>2$ for $1\leq i \leq N+k$,
converges.   In fact,
the stronger statement that $P$ as in \eqref{grisly} with 
$c_i\in\Z$, $|c_i|\geq 2$ for $1\leq i\leq N+k$, 
converges follows from Theorem \ref{pistol} above.
By this remark and Corollary \ref{at most 1} we see that
$V(\B)_{N,k}(\Z)\setminus \mathbf{S}$ consists of at most two points,
one corresponding to a PCF converging to $\beta$ and one
corresponding to a PCF converging to $\beta^\ast$.
This proves Theorem
\ref{radishes}.
\end{proof}

\section{Rings of integers in imaginary quadratic fields}\label{sec:k-imag-quad}

There is an analogous result to Theorem \ref{Hurwitz} for
imaginary quadratic fields.  We first establish some notation.

Let $F$ be an imaginary quadratic field and $\OO$ its ring of integers.
The ring $\OO$ is naturally a $2$-dimensional $\Z$-lattice
with $\|z\|^2=z\overline{z}$ for $z\in\OO$.  Let $V_0$ be any closed
fundamental domain for $\OO$ and $U_0$ its interior.  For $c\in\OO$
let $V_c=c+V_0$ and $U_c=c+U_0$ be the corresponding translated
fundamental domains.
%For $a\in\OO$ let $U_a$ be the open $V_a$ be the closed Voronoi cell
%of the lattice $\OO$ centered at $a$:
%\begin{align*}
%U_a = & \{z\in\OO\mid \|z-a\|< \|z-b\| \mbox{  for all $b\neq a \in \OO$},\\
%V_a = & \{z\in\OO\mid \|z-a\|\leq \|z-b\| \mbox{  for all $b\neq a \in \OO$}.
%\end{align*}
  Let
$U_0^{-1}=\{x\in\CC\mid x^{-1}\in U_0\}$ and let
$M=\{c\in\OO\mid V_c\not\subset U_0^{-1}\}$.  Note that $M$ is finite and
that there exists a closed set $W\subset U_0$ such that
$V_c^{-1}\subset W$ for all $c\in\OO\setminus M$.  We
can now state our analogue of Theorem \ref{Hurwitz}.

\begin{thm}
\label{im q}
Every $\alpha\in\CC$ can be expressed in at most one way as
$\alpha=[c_1, c_2, c_3,\dots]$ with $c_i\in\OO\setminus M$ for $i>1$.
\end{thm}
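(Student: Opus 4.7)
The plan is to mirror the uniqueness argument in Theorem \ref{Hurwitz}: establish an analogue of Lemma \ref{interval} confining $1/\alpha$ to the set $W$, and then use the fact that distinct $\OO$-translates of the fundamental-domain interior $U_0$ are disjoint to pin down the partial quotients one by one. Without loss of generality I would choose $V_0$ so that $0 \in V_0$; this guarantees $c \in V_c$ for every $c \in \OO$ and, since $0 \notin U_0^{-1}$, that $0 \in M$, so in particular every $c \in \OO \setminus M$ is nonzero.

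The key lemma I would prove is: if $\alpha = [c_1, c_2, \ldots, c_N]$ (finite or convergent infinite) with $c_i \in \OO \setminus M$ for every $i$, then $1/\alpha \in W$. The finite case goes by induction on $N$. The base case $N = 1$ says $1/c_1 \in V_{c_1}^{-1} \subset W$, immediate from $c_1 \in V_{c_1}$ and $c_1 \in \OO \setminus M$. For the inductive step, write $\alpha = c_1 + 1/\alpha'$ with $\alpha' = [c_2, \ldots, c_N]$; the inductive hypothesis gives $1/\alpha' \in W \subset U_0$, hence $\alpha \in c_1 + U_0 = U_{c_1} \subset V_{c_1} \subset U_0^{-1}$, and therefore $1/\alpha \in V_{c_1}^{-1} \subset W$. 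The infinite case follows from closedness of $W$: the finite truncations $\alpha_n = [c_1, \ldots, c_n]$ all lie in $V_{c_1}$, which excludes $0$, so $1/\alpha_n \to 1/\alpha$ and the limit remains in $W$.

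The theorem is then immediate. Suppose $\alpha = [c_1, c_2, \ldots] = [d_1, d_2, \ldots]$ with $c_i, d_i \in \OO \setminus M$ for all $i > 1$. The tails $[c_2, c_3, \ldots]$ and $[d_2, d_3, \ldots]$ converge (since the full CFs do) and satisfy the hypothesis of the lemma, so their reciprocals both lie in $W \subset U_0$. Hence $\alpha \in U_{c_1} \cap U_{d_1}$, forcing $c_1 = d_1$ because distinct $\OO$-translates of $U_0$ are disjoint. Stripping off the common head and iterating, $c_i = d_i$ for every $i$. The only delicate point---mild here because it has already been handled in the setup before the theorem---is the boundary issue: one needs the closed set $V_c$ for $c \in \OO \setminus M$ to invert strictly inside the open set $U_0$, which is precisely the purpose of excising the finitely many ``bad'' $c$ into $M$ and of choosing $W$ closed in $U_0$.
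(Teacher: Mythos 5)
Your proposal is correct and follows essentially the same route as the paper: the same key lemma (that $1/[c_1,\ldots,c_N] \in W \subset U_0$ whenever all $c_i \in \OO\setminus M$, proved by the same induction and limit argument), followed by the same disjointness-of-translates argument to identify $c_1 = d_1$ and iterate. The minor differences (your explicit normalization $0\in V_0$, and phrasing the conclusion as strip-and-iterate rather than a minimal counterexample) are cosmetic.
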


Note that such an expression need not exist.

As in the previous section the proof requires an analogue of Lemma
\ref{interval}.
\begin{lemma}
\label{lem1}
Let $\alpha=[c_1,\ldots,c_N]$ with $c_i\in\OO\setminus M$ and possibly
$N=\infty$. Then \mbox{$1/\alpha\in W\subset U_0$}.
\end{lemma}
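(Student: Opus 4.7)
The plan is to induct on $N$ and then pass to a limit for the case $N=\infty$. Throughout I may take $V_0$ to contain $0$, so that $c\in V_c$ for every $c\in\OO$. The definition of $M$ gives, for $c\in\OO\setminus M$, the inclusion $V_c\subset U_0^{-1}$, and hence $V_c^{-1}\subset U_0$; moreover the construction of $W$ supplies $V_c^{-1}\subset W$ for all such $c$.

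For the base case $N=1$, we have $c_1\in V_{c_1}$ and $c_1\in\OO\setminus M$, so $1/\alpha = 1/c_1 \in V_{c_1}^{-1}\subset W$. For the inductive step, write $\alpha=c_1+1/\alpha'$ with $\alpha'=[c_2,\ldots,c_N]$. The inductive hypothesis gives $1/\alpha'\in W\subset U_0$, so $\alpha\in c_1+U_0=U_{c_1}\subset V_{c_1}$. Then $V_{c_1}^{-1}\subset W$ yields $1/\alpha\in W$, completing the induction.

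For the infinite case, the convergents $\alpha_n=[c_1,\ldots,c_n]$ satisfy $1/\alpha_n\in W$ by the finite case. The set $M$ contains all elements of $\OO$ of sufficiently small absolute value (for instance, every $c$ for which $V_c$ meets a neighborhood of $0$, since then $V_c^{-1}$ is unbounded and cannot lie in the bounded set $U_0$), so the restriction $c_i\in\OO\setminus M$ forces $|c_i|$ to be large enough that $|c_n c_{n+1}|\geq 4$; Worpitzky's Theorem \ref{pistol} then yields convergence $\alpha_n\to\alpha$. Since $1/\alpha_n\in W\subset U_0$ is bounded, $\alpha_n$ is bounded away from $0$, whence $1/\alpha_n\to 1/\alpha$, and closedness of $W$ gives $1/\alpha\in W$.

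The main obstacle I anticipate is handling the infinite case: one needs both convergence of the continued fraction (via Worpitzky) and continuity of inversion at the limit (which depends on the boundedness built into $W\subset U_0$). The finite induction is essentially just an unpacking of the definitions of $V_c$, $U_0^{-1}$, and $W$.
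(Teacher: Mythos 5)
Your proof is correct and follows essentially the same route as the paper: the same induction ($\alpha = c_1 + 1/\alpha' \in U_{c_1}$, then invert into $W$) and the same limiting argument using closedness of $W$, which you justify in more detail via boundedness of $W \subset U_0$. The Worpitzky detour is unnecessary (convergence of $[c_1,c_2,\ldots]$ is implicit in the hypothesis that it defines $\alpha$), which is just as well since the claimed bound $|c_nc_{n+1}|\geq 4$ for $c_i\notin M$ is not actually justified for an arbitrary fundamental domain.
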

\begin{proof}
First assume $N$ is finite and proceed by induction on $N$.  If $N=1$,
then the result follows since $\alpha=c_1\in U_{c_1}\subset U_0^{-1}$.
Now suppose the result holds for $\alpha'=[c_2,\ldots, c_N]$.  Then
$\alpha=c_1+ 1/\alpha'$.  So we
have $\alpha\in c_1+U_0=U_{c_1}\subset W^{-1}$ and $1/\alpha\in
W$.

The case with $N=\infty$ follows by taking limits since $W$ is closed.
\end{proof}

\begin{proof}[Proof of Theorem \textup{\ref{im q}}]
As in the proof of Theorem \ref{Hurwitz} let 
\[
\alpha=[c_1,c_2,c_3,\dots]=[d_1,d_2,d_3,\dots]
\]
 be a counterexample
chosen such that the first $i$ with $c_i\ne d_i$ is minimal among all
counterexamples.  As before we must have $i=1$ since otherwise
$[c_2,c_3,\dots]=[d_2,d_3,\dots]$ would be a counterexample with
smaller $i$.  Now let $\alpha'=[c_2,c_3,\dots]$, so
$\alpha=c_1+1/\alpha'$.  By Lemma \ref{lem1},
$1/\alpha'\in U_0$; therefore $c_1$ is the unique 
element of $\OO$ with $\alpha\in U_{c_1}$ and by the same logic so is $d_1$.
\end{proof}

\begin{thm}
\label{deg im}
Let $\OO$ be the ring of integers in an imaginary quadratic field F.
Suppose $\B=\{\beta,\bs\}$ is the multiset of roots
of $Q(x)=Ax^2+Bx+C\in \OO[x]$
with $\beta\notin F\cup \{\infty\}$.  Then for any
$N\geq 0$ and $k\geq 1$ with $N+k>2$, the $\OO$-points on 
$V(Q)_{N,k}=V(\B)_{N,k}$ are degenerate,
i.e., they are not Zariski dense.
\end{thm}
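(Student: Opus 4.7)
The plan is to adapt the proof of Theorem \ref{radishes} to the imaginary quadratic setting, with the role of $\{0,\pm 1,\pm 2\}\subset\Z$ taken over by a suitable finite subset $M'\subset\OO$. First, I will enlarge the finite set $M$ from Theorem \ref{im q} to
\[
M' \colonequals M \cup \{c \in \OO : |c| < 2\},
\]
which remains finite since $\OO$ is a discrete lattice in $\CC$.

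Next, I will define the bad set
\[
\mathbf{S} \colonequals \{(c_1,\ldots, c_N, \overline{c_{N+1},\ldots, c_{N+k}})\in V(\B)_{N,k}(\OO)\mid c_i\in M' \text{ for some } 1\leq i\leq N+k\}
\]
and identify it as the set of $\OO$-points of the finite union
\[
S \colonequals \bigcup_{j,\,m} V(\B)_{N,k}\cap (x_j=m)
\]
taken over $1\leq j\leq N+k$ and $m\in M'$. As in the $\Z$ case, each hyperplane section is a proper subvariety, the finite union $S$ is a proper subvariety (noting that $\dim V(\B)_{N,k}\geq 1$ since $N+k>2$), and hence $\mathbf{S}$ is not Zariski dense in $V(\B)_{N,k}$.

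For a point $p\in V(\B)_{N,k}(\OO)\setminus \mathbf{S}$, every partial quotient $c_i$ lies in $\OO\setminus M'$ and therefore satisfies $|c_i|\geq 2$. Worpitzky's theorem (Theorem \ref{pistol}) then forces the associated PCF $P$ to converge in $\CC$, and since $\Quad_{N,k}(P)=Q(x)$, the limit must be a root of $Q$, namely either $\beta$ or $\bs$. As $c_i\in\OO\setminus M'\subseteq \OO\setminus M$ for all $i$ (and in particular for $i>1$), Theorem \ref{im q} ensures that each of $\beta,\bs$ is the limit of at most one such PCF. Consequently $V(\B)_{N,k}(\OO)\setminus \mathbf{S}$ has at most two points, and the $\OO$-points of $V(\B)_{N,k}$ are not Zariski dense.

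The main obstacle is identifying a single finite set $M'$ that simultaneously enforces Worpitzky convergence (which requires $|c|\geq 2$) and the uniqueness hypothesis of Theorem \ref{im q} (which requires $c\notin M$). The bound $|c|\geq 2$ would not follow from $c\notin M$ alone, so the enlargement step is essential; happily, both constraints are cofinite in $\OO$, so $M'$ remains finite and the corresponding union of hyperplane sections remains a proper subvariety, allowing the two conditions to be accommodated at once.
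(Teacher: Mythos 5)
Your proposal is correct and follows essentially the same route as the paper: the paper likewise combines Worpitzky's theorem (forcing convergence when all $|c_i|\geq 2$) with the uniqueness statement of Theorem \ref{im q} (for $c_i\notin M$), and then argues as in Theorem \ref{radishes} that the points with some coordinate in the finite bad set lie in a proper closed subvariety while at most two points remain outside. Your explicit introduction of the enlarged finite set $M'$ merely makes precise the step the paper leaves implicit in the phrase ``now argue as in the proof of Theorem \ref{radishes}.''
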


\begin{proof}
By Theorem \ref{pistol}, any $\OO$-point 
\[
(c_1, \ldots , c_N, c_{N+1},\ldots, c_{N+k})
\text{ with } \left|c_i\right|\geq 2\text{  for } 1\leq i\leq N+k
\]
on the affine variety
$V(\B)_{N,k}$ corresponds to an  $\OO$-PCF
\[
P=[c_1, \ldots , c_{N},\overline{c_{N+1},\ldots, c_{N+k}}]
\]
converging to $\beta$ or $\beta^{\ast}$.
By Theorem \ref{im q}, there is 
at most one way to write $\beta$ or $\beta^{\ast}$ as
$[c_1,\dots,c_{N},\overline{c_{N+1}, \dots, c_{N+k}}]$ with all the
$c_i$ satisfying $c_i\not\in M$ and $\left|c_i\right|\geq 2$.

Now argue as in the proof of Theorem \ref{radishes}.
\end{proof}
\section{Matrix-factorization varieties}
\label{dumber}
Suppose $A\in\SLT(R)$.
\begin{defn}
\label{gen}
{\rm For $A \in \SLT(R)$, let
$\oV_{N,k}(A)\subseteq \A^{N,k}\simeq \A^{N+k}$ be the variety
\[
\oV_{N,k}(A):A=D(y_1) \cdots D(y_N)D(x_1)\cdots D(x_k)tD(-y_N)D(-y_{N-1})
\cdots D(-y_1)t^{k+1}.
\]
}
\end{defn}
\noindent In case $N=0$ we have that $\oV_{0,k}(A)$ is the variety
$\oV_{k}(A)$ of \cite{jz}*{Defn.~1.3(d)}:
\begin{equation}
\label{hardly}
\oV_{0,k}(A)=\oV_{k}(A): A=D(x_1)\cdots D(x_k)t^k
\end{equation}
(using the fact that $t^2 = I_2$).

In case $N=1$, note that $D(a_k)tD(-b_1)=D(a_k-b_1)$,
so that
\begin{align}
\label{pitching}
\oV_{1,k}(A) : A &= D(y_1)D(x_1)\cdots D(x_k)tD(-y_1)t^{k+1}\\
\nonumber &=D(y_1)
D(x_1)\cdots D(x_{k-1})D(x_k-y_1)t^{k+1}.
\end{align}
Hence we have a linear $R$-isomorphism 
\begin{equation}
\label{juice}
\varphi: \oV_{1,k}(A)\rightarrow\oV_{k+1}(A)=\oV_{0, k+1}(A)\mbox{  by  }
(b_1, a_1,\ldots, a_k)\mapsto (b_1, a_1, \ldots , a_{k-1}, a_k-b_1).
\end{equation}

We get a version of Theorems \ref{radishes} and \ref{deg im} for the
varieties $\oV_{N,k}(A)$.

\begin{thm}
Let $\OO$ be $\Z$ or the ring of integers in an imaginary quadratic field.
Suppose $A\in\SLT(\OO)$.  Then for any
$N\geq 0$ and $k\geq 1$ with $N+k>3$, the $\OO$-points on 
$\oV_{N,k}(A)$ are degenerate,
i.e., they are not Zariski dense.
\end{thm}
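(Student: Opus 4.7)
The plan is to embed $\oV_{N,k}(A)$ as a closed subvariety of a PCF variety $V(\B)_{N,k}$ to which Theorems \ref{radishes} and \ref{deg im} apply, and then rerun their proofs one level deeper. Given $A\in \SLT(\OO)$, set $E_A := At^{-k}$ and
\[
Q_A(x) := (E_A)_{21}x^2 + \bigl((E_A)_{22}-(E_A)_{11}\bigr)x - (E_A)_{12}\in\OO[x],
\]
with root-multiset $\B=\{\beta,\bs\}$. Comparing Definition \ref{gen} with \eqref{gwen} shows that the product of $D$-matrices in Definition \ref{gen} equals $E(P)\cdot t^{k}$, so $p=(y_1,\ldots,y_N,x_1,\ldots,x_k)\in \oV_{N,k}(A)$ forces $E(P)=E_A$; by \eqref{gwen1} this forces $\Quad_{N,k}(P)=Q_A$, giving a closed inclusion $\oV_{N,k}(A)\subseteq V(\B)_{N,k}$. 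In the nondegenerate case, where $Q_A\ne 0$ and $\beta\notin K\cup\{\infty\}$, the hypotheses of Theorem \ref{radishes} or \ref{deg im} are met.

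I would not simply cite those theorems on $V(\B)_{N,k}$, because non-density in $V(\B)_{N,k}$ need not descend to the strictly smaller subvariety $\oV_{N,k}(A)$. Instead, repeat the proofs inside $\oV_{N,k}(A)$. Let $\mathbf{S}'\subseteq \oV_{N,k}(A)(\OO)$ denote the points with some coordinate of absolute value $\le 2$ (respectively, some coordinate in the set $M$ of \S\ref{sec:k-imag-quad}). For $p$ outside $\mathbf{S}'$, the $\OO$-PCF $P=[y_1,\ldots,y_N,\overline{x_1,\ldots,x_k}]$ has all partial quotients with $|c_i|\ge 2$ (resp.\ $c_i\notin M$), so Worpitzky's Theorem \ref{pistol} guarantees convergence of $P$ to some $\alpha\in\CC$; since $E(P)=E_A$, equation \eqref{gwen1} forces $\alpha\in\B$. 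Corollary \ref{at most 1} (resp.\ Theorem \ref{im q}) then caps the number of such $p$ at two---one per root.

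It remains to note that $\mathbf{S}'$ is contained in the closed subvariety
\[
S' := \bigcup_{j,c} \oV_{N,k}(A)\cap\{x_j=c\},
\]
where $j$ runs over coordinate indices of $\A^{N,k}$ and $c$ over the finite set $\{0,\pm1,\pm 2\}$ (resp.\ $M$). The hypothesis $N+k>3$ forces $\dim \oV_{N,k}(A)\ge 1$, so provided nondegeneracy of $Q_A$ ensures that no irreducible component of $\oV_{N,k}(A)$ lies entirely inside one of these finitely many hyperplanes, $S'$ is a proper closed subvariety. Combining with the at-most-two residual points establishes that $\oV_{N,k}(A)(\OO)$ is not Zariski dense.

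The main obstacle, and the step I expect to require the most care, is the final assertion that no component of $\oV_{N,k}(A)$ is trapped inside a small-value coordinate hyperplane. Such containment would fix a factor $D(c)$ (and its mirror $D(-c)$ when the coordinate is some $y_j$) in every factorization, reducing the defining equation to one of type $\oV_{N',k'}(A')$ with $N'+k'=N+k-1$. An induction on $N+k$, bottoming out in the excluded range $N+k\le 3$, should close this loop; the bookkeeping for the descent---and the verification that the reduction remains within the nondegenerate setting---constitutes the substantive technical work.
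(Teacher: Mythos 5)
Your argument is essentially the paper's: outside the locus where some coordinate is small (resp.\ lies in $M$), Worpitzky's Theorem \ref{pistol} gives convergence of the PCF to a fixed point of $At^{k}$, and Corollary \ref{at most 1} (resp.\ Theorem \ref{im q}) caps the number of such integral points at two, one per eigenvector, so everything reduces to the properness of the small-coordinate locus. The paper's own proof is in fact terser than yours --- it never addresses the step you correctly single out as the delicate one (that no component of $\oV_{N,k}(A)$ lies in a hyperplane $x_j=c$ for $c$ small), merely asserting the analogous properness without proof in Theorem \ref{radishes} and leaving it implicit here --- so your version, including the explicit reduction to $V(\B)_{N,k}$ via $E_A=At^{k}$ and the flagged induction, is the same approach carried out with somewhat more care.
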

\begin{proof}
By Theorem \ref{pistol}, any $\OO$-point $p=(b_1, \ldots , b_N,
a_1,\ldots , a_k)$ with $\left|a_i\right|\geq 2$ for $i\geq 1$ on the
affine variety $\oV_{N,k}(A)$ corresponds to a periodic continued
fraction
\[
P(p)=[b_1, \ldots , b_N,\overline{a_1,\ldots, a_k}]
\]
converging to a value $\beta$.  Note that 
$\left(\begin{smallmatrix}\beta\\1 \end{smallmatrix}\right)$ must be an
eigenvector of $At^k$.  By Corollary \ref{at most 1} or Theorem
\ref{im q}, for each eigenvector 
$\left(\begin{smallmatrix}\beta_i\\1 \end{smallmatrix}\right)$
of $At^k$ there is at
most one way to write $\beta_i$ as
$[b_1,\dots,b_N,\overline{a_1,\dots,a_k}]$ with all the
$a_i$ satisfying $\left|a_i\right|\geq 2$ and $a_i\not\in M$ if applicable.
\end{proof}

\section{The fibration of 
\texorpdfstring{$V(\B)_{N,k}$}{V(B)N,k} over the Fermat-Pell rational curve}
\label{radio}
Suppose now that 
$\B=\{\beta,\bs\}$ is the multiset of roots of 
$0\neq Q(x)=Ax^2+Bx+C\in R[x]$.
The Fermat-Pell rational curve $\FP_k(\B)_{/R}=\FP_k(Q)_{/R}$ and the fibration
$\pi_{\FP}:V(\B)_{N,k}\rightarrow \FP_{k}(\B)$ over $R$
are defined in \cite{bej}*{Sect.~3.2}.
The Fermat-Pell curve $\FP_k(\B)\subseteq \A^4_{(a,b,c,d)}$ is defined by
\begin{equation}
\label{dusty}
ad-bc=(-1)^k, \, Bc=A(d-a),\, -Ab=Cc, \, -Bb=C(d-a)\ .
\end{equation}
For a point 
\[
p=(b_1, \ldots, b_N, a_1,\ldots , a_k)\in V(\B)_{N,k}\subseteq
\A^{N,k},
\]
we have 
\begin{equation}
\label{grouch}
\pi_{\FP}(p)= (E_{11}(p), E_{12}(p), E_{21}(p),E_{22}(p))\in \FP_{k}(\B)
\end{equation}
with the polynomials $E_{ij}(b_1, \ldots , b_N, a_1, \ldots , a_k)$,
$1\leq i, j\leq 2$, given in \eqref{gwen}.

In case $\alpha\in R$ but $\sqrt{\alpha}\notin R$, we 
take $Q(x)=x^2-\alpha\in R[x]$ and
write
$\FP_{k}(\alpha):= \FP_{k}(\{\sqrt{\alpha}, -\sqrt{\alpha}\})=\FP_k(Q)$.
In this case
 we have the familiar
plane conic model of the Fermat-Pell rational curve
\begin{equation}
\label{indict}
\FP_{k}(\alpha) : y^2-\alpha x^2=(-1)^k 
\end{equation}
with the $R$-isomorphism between the two models \eqref{dusty}
and \eqref{indict} given by 
\[
(a,b,c,d)\mapsto (c,d)=(x,y) .
\]

If $(a,b,c,d)\in\FP_{k}(\B)$, the  fiber $\pi_{\FP}^{-1}(a,b,c,d)$ 
is defined by the equations
\begin{equation}
\label{punted}
\pi_{\FP}^{-1}(a,b,c,d): E(p)=\left[\begin{smallmatrix}a & b\\c & d\end{smallmatrix}\right].
\end{equation}
Hence we have the following:
\begin{prop}
\label{gadfly}
\textup{(cf. \cite{bej}*{Thm.~3.7(b)}.) } 
The fiber $\pi_{\FP}^{-1}(a, b,c,d)$ is $R$-isomorphic to
$\oV_{N,k}(\left[\begin{smallmatrix}a & b\\c & d\end{smallmatrix}\right] t^k)$.
\end{prop}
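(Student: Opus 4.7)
The plan is to establish the $R$-isomorphism as the identity map on the coordinates of the ambient affine space $\A^{N,k}$, by placing the defining equations of the two fibers side by side. By \eqref{punted} together with the formula \eqref{gwen} for $E(p)$, the fiber $\pi_{\FP}^{-1}(a,b,c,d)$ is cut out in $\A^{N,k}$ by the single matrix equation
\[
D(b_1)\cdots D(b_N)\,D(a_1)\cdots D(a_k)\,t\,D(-b_N)\cdots D(-b_1)\,t \;=\; \begin{bmatrix} a & b \\ c & d \end{bmatrix}.
\]
On the other hand, Definition \ref{gen} applied with $A = \left[\begin{smallmatrix} a & b \\ c & d \end{smallmatrix}\right] t^k$ presents $\oV_{N,k}(A)$ as the subscheme of $\A^{N,k}$ cut out by
\[
D(y_1)\cdots D(y_N)\,D(x_1)\cdots D(x_k)\,t\,D(-y_N)\cdots D(-y_1)\,t^{k+1} \;=\; \begin{bmatrix} a & b \\ c & d \end{bmatrix} t^k.
\]

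Next, I would right-multiply the fiber equation by $t^k$ and use $t^2 = I_2$ to collapse the trailing $t \cdot t^k$ on the left-hand side into $t^{k+1}$. After the substitution $y_i = b_i$ for $1 \le i \le N$ and $x_j = a_j$ for $1 \le j \le k$, the resulting equation is verbatim the defining equation of $\oV_{N,k}(A)$, so the identity on coordinates of $\A^{N,k}$ descends to an $R$-isomorphism (with the identity as its inverse) between the two closed subschemes.

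The only bookkeeping point is to verify that $\pi_{\FP}^{-1}(a,b,c,d)$, which a priori lies inside $V(\B)_{N,k}$, actually coincides schematically with the locus in all of $\A^{N,k}$ cut out by $E(p) = \left[\begin{smallmatrix} a & b \\ c & d \end{smallmatrix}\right]$. This is forced by \eqref{gwen1} together with the Fermat-Pell relations \eqref{dusty}: whenever $E(p) = \left[\begin{smallmatrix} a & b \\ c & d \end{smallmatrix}\right]$ with $(a,b,c,d) \in \FP_k(\B)$, the quadratic $\Quad_{N,k}(P)(x) = cx^2 + (d-a)x - b$ is proportional to $Q(x) = Ax^2 + Bx + C$, so $p$ automatically lies in $V(\B)_{N,k}$. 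I do not anticipate any real obstacle; the entire content of the statement is the one-line identity $t \cdot t^k = t^{k+1}$, and this argument mirrors verbatim the special case $N=0$ treated in \cite{bej}*{Thm.~3.7(b)}.
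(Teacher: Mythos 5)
Your proof is correct and is essentially the argument the paper intends: Proposition \ref{gadfly} is stated as an immediate consequence of \eqref{punted} and Definition \ref{gen}, and the whole content is indeed that right-multiplying the fiber equation $E(p)=\left[\begin{smallmatrix}a & b\\c & d\end{smallmatrix}\right]$ by $t^{k}$ (using $t^{2}=I_{2}$) turns it verbatim into the defining equation of $\oV_{N,k}(\left[\begin{smallmatrix}a & b\\c & d\end{smallmatrix}\right]t^{k})$, so the identity on coordinates is the isomorphism. Your extra bookkeeping check via \eqref{gwen1} and \eqref{dusty} that the locus $E(p)=\left[\begin{smallmatrix}a & b\\c & d\end{smallmatrix}\right]$ automatically lands in $V(\B)_{N,k}$ is a sensible addition that the paper leaves implicit.
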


We will need to understand when $\#\FP_k(\B)(R)=\infty$.
Because $\FP_k(\B)$ is a rational curve over $R$ with $2$
points at infinity, $\FP_k(\B)(R)$ could be either infinite or finite.
\begin{defn}
\label{pistol1}
Assume $\beta\not\in K\cup \{\infty\}$. Let $R_\beta\subset K(\beta)$
be the (left) order of the $R$-lattice $L_\beta:=\beta R + R$.
\end{defn}

\begin{prop}
\label{pile}
Assume $\beta\not\in K\cup \{\infty\}$ with $\B=\{\beta,\bs\}$
the multiset of roots of $Ax^2+Bx+C\in R[x]$.
\begin{enumerate}[\upshape (a)]
\item
\label{pile1}
$R_{\beta}=R_{\beta^\ast}$.
\item
\label{pile2}
Suppose $A=1$, so $\beta$ and $\bs$ satisfy a {\em monic}
quadratic polynomial over $R$. Then
\[
R_\beta=R_{\beta^\ast}=R[\beta]=R[\bs].
\]
\end{enumerate}
\end{prop}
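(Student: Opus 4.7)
The plan is to compute the order $R_\beta$ explicitly in coordinates and then read off the desired symmetry under $\beta \leftrightarrow \bs$. First I would parametrize $K(\beta) = K + K\beta$ as $x = u + v\beta$ with $u,v \in K$, and use the defining relation $\beta^2 = -(B/A)\beta - C/A$ to compute
\[
x\cdot\beta \,=\, \bigl(u - vB/A\bigr)\beta - vC/A.
\]
Requiring both $x$ and $x\beta$ to lie in $L_\beta = R + R\beta$ forces $u,v \in R$ together with the ``conductor'' conditions $vB/A \in R$ and $vC/A \in R$. This yields the explicit description
\[
R_\beta \,=\, R + \mathfrak{f}\cdot\beta, \qquad \mathfrak{f} \,:=\, \{v \in R : vB/A,\ vC/A \in R\}.
\]
The crucial observation is that $\mathfrak{f} \subseteq R$ depends only on the polynomial $Q$ and not on the root chosen, so the identical computation applied to $\bs$ (another root of the same $Q$) gives $R_{\bs} = R + \mathfrak{f}\cdot\bs$.

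For part (a), the Vieta relation $\beta + \bs = -B/A$ then finishes the job: for any $v \in \mathfrak{f}$, the condition $vB/A \in R$ gives $v\bs = -vB/A - v\beta \in R + \mathfrak{f}\beta = R_\beta$, so $R_{\bs} \subseteq R_\beta$; symmetry supplies the reverse inclusion.

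For part (b), the hypothesis $A = 1$ forces $B, C \in R$, hence $\mathfrak{f} = R$ and $R_\beta = R + R\beta$. Since $\beta^2 = -B\beta - C \in R + R\beta$, the ring $R[\beta]$ also equals $R + R\beta$, so $R_\beta = R[\beta]$. The analogous identity for $\bs$, combined with $\bs = -B - \beta \in R[\beta]$, yields $R[\beta] = R[\bs]$, completing the chain of equalities.

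The only real subtlety I anticipate is the careful handling of the (possibly non-unit) denominator $A$ when solving $x\beta \in L_\beta$; in particular one must resist assuming $\beta \in R_\beta$, which fails exactly when $Q$ is not ``monic up to units'' in $R$, i.e., when $A \nmid B$ or $A \nmid C$. Apart from this bookkeeping, the argument is a direct calculation that reduces the symmetry statement in (a) to the manifest $\beta \leftrightarrow \bs$ symmetry of the ideal $\mathfrak{f}$.
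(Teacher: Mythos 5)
Your proof is correct, but it takes a genuinely different and more computational route than the paper's. You determine the left order explicitly in coordinates: since $\beta\notin K$, the lattice $L_\beta$ is free on $1,\beta$, so $xL_\beta\subseteq L_\beta$ reduces to $x\cdot 1, x\cdot\beta\in L_\beta$, which gives $R_\beta=R+\mathfrak{f}\beta$ with $\mathfrak{f}=\{v\in R: vB/A\in R,\ vC/A\in R\}$; both claims then follow from the manifest $\beta\leftrightarrow\bs$ symmetry of $\mathfrak{f}$ together with $\beta+\bs=-B/A$. The paper instead argues abstractly in two sentences: for (a), $L_{\bs}$ is the Galois conjugate of $L_\beta$, so $R_{\bs}=\sigma(R_\beta)$, and orders in quadratic extensions are Galois-stable (this uses that $R$, being a ring of $S$-integers, is integrally closed, so that an order contains the traces of its elements); for (b), when $A=1$ the lattice $L_\beta=R[\beta]$ is itself an order, and the left order of an order is the order itself. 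The paper's argument is shorter and leans on standard structural facts; yours is self-contained, avoids invoking Galois-stability, and yields as a byproduct the explicit ``conductor'' ideal $\mathfrak{f}$ (note $A\in\mathfrak{f}$, so $\mathfrak{f}\neq 0$ and $R_\beta$ really is an order), which is more information than the statement requires. Your closing caveat about not assuming $\beta\in R_\beta$ is well taken and is exactly the point that makes the explicit computation, rather than a naive appeal to $R[\beta]$, necessary in the non-monic case.
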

\begin{proof}
Part (\ref{pile1}) follows from the fact that orders in quadratic
extensions are Galois invariant.  Part (\ref{pile2}) is just the fact
that the (left) order of an order is itself.
\end{proof}

\begin{prop}
\label{fence}
Assume $\beta\not\in K\cup\{\infty\}$.    Then there is a bijective
correspondence between $\FP_k(\B)(R)$ and units $u$  of $R_\beta$
with $\Nm_{K(\beta)/K}(u)= (-1)^k$.
\end{prop}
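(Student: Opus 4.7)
The plan is to construct an explicit bijection
\[
\Phi\colon \FP_k(\B)(R) \longrightarrow \{u \in R_\beta^\times : \Nm_{K(\beta)/K}(u) = (-1)^k\}, \quad (a,b,c,d) \longmapsto c\beta + d .
\]
The motivation is that if $M := \left[\begin{smallmatrix}a & b\\ c & d\end{smallmatrix}\right]$, then $\left(\begin{smallmatrix}\beta\\1\end{smallmatrix}\right)$ should be an eigenvector of $M$ with eigenvalue $c\beta + d$, and the four Fermat-Pell equations in \eqref{dusty} are exactly the conditions forcing this to hold once one uses $A\beta^2 = -B\beta - C$. This unpacks the middle relation in \eqref{gwen1}.

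To verify $\Phi$ is well-defined, I would first show $u := c\beta + d \in R_\beta$. Clearly $u \in L_\beta$, and a direct calculation with $A\beta^2 = -B\beta - C$ combined with the Fermat-Pell equations $A(d-a) = Bc$ and $-Ab = Cc$ from \eqref{dusty} gives $u\beta = a\beta + b \in L_\beta$, so $u L_\beta \subseteq L_\beta$ and hence $u \in R_\beta$. Using Vieta ($\beta + \bs = -B/A$, $\beta\bs = C/A$) together with the Fermat-Pell equations once more,
\[
\Nm_{K(\beta)/K}(u) = (c\beta + d)(c\bs + d) = (Ad^2 - Bcd + Cc^2)/A = ad - bc = (-1)^k .
\]
Since $\Nm(u) \in R^\times$ and $R_\beta = R_{\bs}$ is Galois-stable by Proposition \ref{pile}(\ref{pile1}), we conclude $u^{-1} = (-1)^k \bar{u} \in R_\beta$, so $u$ is in fact a unit.

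For the inverse direction, given $u \in R_\beta^\times$ with $\Nm_{K(\beta)/K}(u) = (-1)^k$, the conditions $u, u\beta \in L_\beta$ uniquely determine $a,b,c,d \in R$ by $u = c\beta + d$ and $u\beta = a\beta + b$. Reversing the computations above recovers the first three Fermat-Pell equations together with $ad - bc = (-1)^k$; the fourth equation $-Bb = C(d-a)$ is then a purely formal consequence of the first two, using $A \neq 0$ (which holds since $\beta \notin K$). Both composites are the identity by construction: $(a,b,c,d) \mapsto u \mapsto (a',b',c',d')$ trivially reproduces $c$ and $d$, and recovers $a, b$ through the same Fermat-Pell formulas $a = d - Bc/A$, $b = -Cc/A$.

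I do not expect a serious obstacle. The essential content is the familiar observation that a $2 \times 2$ matrix with two $K(\beta)$-rational eigenvectors acts on each eigenline as multiplication by a scalar in $K(\beta)$, the two eigenvalues being Galois-conjugate with product equal to the determinant. The only bookkeeping is to match the integrality of $a,b,c,d$ with the definition of $R_\beta$ as the left order of $L_\beta$, and this is automatic once $a$ and $b$ are read off from $u\beta = a\beta + b$.
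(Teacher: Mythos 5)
Your proposal is correct and follows essentially the same route as the paper: send $(a,b,c,d)$ to $u=c\beta+d$, use the Fermat--Pell relations from \eqref{dusty} to check $uL_\beta\subseteq L_\beta$ and $\Nm_{K(\beta)/K}(u)=ad-bc=(-1)^k$, and invert by reading off $c,d$ from $u\in L_\beta$ and $a,b$ from $u\beta=a\beta+b$. The only differences are cosmetic: you verify the norm identity by a direct Vieta computation where the paper invokes the eigenvalue interpretation, and you spell out the unit and uniqueness bookkeeping that the paper leaves implicit.
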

\begin{proof}
Let $\mathcal{A}=\left[\begin{smallmatrix}a & b\\c &
    d\end{smallmatrix}\right]\in\FP_k(\B)(R)$.  Then the two eigenvalues
of $\mathcal{A}$ are $c\beta +d$ and $c\bs +d$, so
$\det(\mathcal{A})=(c\beta+d)(c\bs+d)=(-1)^k$. Furthermore,
$$(c\beta+d)\cdot1=c\beta+d\in L_\beta$$ and by \eqref{dusty}
$$(c\beta+d)\beta=\left(d-c\frac{B}{A}\right)\beta-c\frac{C}{A}=a\beta+b\in
L_\beta.$$ Therefore, $(c\beta+d)L_\beta\subset L_\beta$ and $u=c\beta+d$ 
is a unit of
$R_\beta$  with $\Nm_{K(\beta)/K}(u)=(-1)^k$.

Conversely, let $c\beta+d\in R_\beta^\times$ have norm $(-1)^k$.  Let
$(c\beta+d)\beta=a\beta+b$.  Then 
\[
\begin{bmatrix}a & b\\c &
    d\end{bmatrix}\in\FP_k(\B)(R).
\]
\end{proof}

\section{Nondegeneracy of integral points on \texorpdfstring{$V(\B)_{1,k}$}{V(B)1,k} and
\texorpdfstring{$V(\B)_{0,k}$}{V(B)0,k}
for \texorpdfstring{$k$}{k} sufficiently large}
\label{pounded}
 We can now formulate and prove our main theorem on the integral
points on the PCF varieties $V(\B)_{1,k}$ and $V(\B)_{0,k}$.
It is convenient to formulate a standard hypothesis:
\begin{hyp}
\label{windyy}
$\HYP_m(R)$ : For every $A\in\SLT(R)$, $\oV_m(A)(R)\neq\emptyset$.
\end{hyp}
\begin{remark}
\label{moules}
{\rm 
\begin{enumerate}[\upshape (a)]
\item
\label{moules1}
 Equivalent formulations of $\HYP_{m}(R)$ are given in
\cite{jz}*{Thm.~2.2(b)}.
\item
\label{moules2}
For $R$ with $R^\times$ infinite,
the hypothesis $\HYP_{m}(R)$ implies that $\oV_{m}(A)(R)$ is 
Zariski dense if $m\geq 4$ for every $A\in\SLT(R)$ by 
\cite{jz}*{Thm.~4.4(b)}.
\end{enumerate}
}
\end{remark}

\begin{thm}
\label{main}
Let
$\B=\{\beta,\bs\}$ be the multiset of roots of 
$Ax^2+Bx+C\in R[x]$ with $\beta\notin K\cup\{\infty\}$.
Suppose there are infinitely many
units in $R_\beta$ as in Definition \textup{\ref{pistol1}} with  norm 
down to  $K$ equal to
$(-1)^k$ and suppose
that there are infinitely many units in $R$.
\begin{enumerate}[\upshape (a)]
\item
\label{main1}
\begin{enumerate}[\upshape (i)]
\item
\label{main11}
Assume $\HYP_{m}(R)$. Then the $R$-points
of $V(\B)_{1,k}$ are Zariski dense if $k\geq m-1\geq 3$.
\item
\label{main12}
Assume $\HYP_{m}(R)$.  Then the $R$-points of $V(\B)_{0,k}$ are Zariski dense 
if $k\geq m\geq 4$.
\end{enumerate}
\item
\label{main2}
\begin{enumerate}[\upshape (i)]
\item
\label{main21}
 The $R$-points of $V(\B)_{1,k}$ are Zariski dense if $k\geq 8$. 
\item
\label{main22}
The $R$-points of  $V(\B)_{0,k}$ are Zariski dense
if $k\geq 9$.  
\end{enumerate}
\item
\label{main3}
Assume the Generalized Riemann Hypothesis as 
in \cite{jz1}*{Sect.~3D}. 
\begin{enumerate}[\upshape (i)]
\item
\label{main31}
The $R$-points on  $V(\B)_{1,k}$ are
Zariski dense for $k\geq 4$ if $S$ contains a real archimedean prime, 
for $k\geq 5$ if $S$
contains a finite prime, and for $k\geq 6$ in general. 
\item
\label{main32}
The $R$-points on  $V(\B)_{0,k}$ are
Zariski dense for $k\geq 5$ if $S$ contains a real archimedean prime, 
for $k\geq 6$ if $S$
contains a finite prime, and for $k\geq 7$ in general. 
\end{enumerate}
\end{enumerate}
\end{thm}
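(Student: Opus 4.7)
The strategy is to exploit the fibration $\pi_{\FP}\colon V(\B)_{N,k}\to\FP_{k}(\B)$ of Section \ref{radio} by combining density on the base with fiberwise density. Proposition \ref{fence} converts the hypothesis that $R_{\beta}$ has infinitely many units of norm $(-1)^{k}$ into the statement $\#\FP_{k}(\B)(R)=\infty$. For any $(a,b,c,d)\in\FP_{k}(\B)(R)$, setting $\mathcal{A}=\bigl[\begin{smallmatrix}a&b\\c&d\end{smallmatrix}\bigr]$, Proposition \ref{gadfly} identifies the fiber $\pi_{\FP}^{-1}(a,b,c,d)$ with $\oV_{N,k}(\mathcal{A}t^{k})$; for $N=0$ this is $\oV_{k}(\mathcal{A}t^{k})$ by \eqref{hardly}, and for $N=1$ the linear $R$-isomorphism $\varphi$ of \eqref{juice} identifies it with $\oV_{k+1}(\mathcal{A}t^{k})$. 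Thus the fibers are matrix-factorization varieties of the type treated in \cite{jz}.

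For part (a), I invoke Remark \ref{moules}(\ref{moules2}): given the standing assumption that $R^{\times}$ is infinite, $\HYP_{m}(R)$ forces $\oV_{m}(B)(R)$ to be Zariski dense for every $B\in\SLT(R)$ as soon as $m\ge 4$. Together with the elementary monotonicity $\HYP_{m}(R)\Rightarrow\HYP_{m'}(R)$ for all $m'\ge m$ (pad an $m$-factorization by trailing zeros, using $D(0)=t$ and $t^{2}=I$), this yields Zariski dense $R$-points in each fiber over $\FP_{k}(\B)(R)$ whenever $k\ge m\ge 4$ in case (ii) or $k+1\ge m\ge 4$ in case (i)---precisely the stated conditions. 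Parts (b) and (c) then follow by substituting into (a) the known ranges for $\HYP_{m}(R)$: unconditionally $\HYP_{9}(R)$ (yielding $k\ge 8$ and $k\ge 9$), and under GRH the improved values $m=5,6,7$ from the three cases of \cite{jz1}*{Sect.~3D} (yielding the three regimes of (c)).

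The main obstacle is passing from fiberwise density over an infinite subset of $\FP_{k}(\B)(R)$ to Zariski density on the total space $V(\B)_{N,k}$. I plan to dispatch this using the irreducibility announced as Theorem \ref{queen}: once $V(\B)_{N,k}$ is $K$-irreducible, any proper Zariski closed subset projects under the dominant morphism $\pi_{\FP}$ to a proper, hence finite, subset of the rational curve $\FP_{k}(\B)$, so it meets only finitely many of the infinitely many fibers on which density is already established; therefore the Zariski closure of $V(\B)_{N,k}(R)$ must be the whole space. The edge cases where $V(\B)_{N,k}$ fails to be $K$-irreducible---or, more generally, the task of tracking $R$-points through geometric components permuted by $\mathrm{Gal}(\overline{K}/K)$---account for any residual bookkeeping.
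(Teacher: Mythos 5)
Your overall strategy coincides with the paper's: use Proposition \ref{fence} to convert the unit hypothesis into $\#\FP_k(\B)(R)=\infty$, identify the fibers with $\oV_{k+1}(\mathcal{A}t^k)$ and $\oV_k(\mathcal{A}t^k)$ via Proposition \ref{gadfly} and \eqref{juice}, obtain fiberwise Zariski density from $\HYP_m(R)$ and Remark \ref{moules}(\ref{moules2}), and then descend to the total space; the numerology in all three parts matches the paper, and your explicit padding argument for $\HYP_m(R)\Rightarrow\HYP_{m'}(R)$, $m'\geq m$, is a welcome detail the paper leaves implicit.

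The gap is in the final descent step. First, your justification is wrong as stated: a proper Zariski closed subset $Z$ of an irreducible variety $V$ need not project to a finite subset of the base curve --- it can perfectly well dominate $\FP_k(\B)$ (e.g.\ a hypersurface transverse to the fibration). The statement you actually need is that such a $Z$, having dimension at most $\dim V-1$ and finitely many irreducible components, can \emph{contain} at most finitely many of the pairwise disjoint full fibers, each of which has a component of dimension $\dim V-1$. Second, and more seriously, your descent leans on the $K$-irreducibility of the total space from Theorem \ref{queen}, which is only available for $k\geq 5$ ($N=1$) and $k\geq 6$ ($N=0$), whereas Theorem \ref{main} is asserted for $k$ as small as $3$ (part (a)(i)), $4$ (parts (a)(ii) and (c)(i)) and $5$ (part (c)(ii)). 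Remark \ref{pizza} shows this is not residual bookkeeping: $V(\B)_{1,4}$ and $V(\B)_{0,4}$ can genuinely be unions of two irreducible components. The paper avoids this entirely by invoking a fibration principle with no irreducibility assumption on the total space: if the set of base points whose fibers have Zariski dense $R$-points is Zariski dense in $\FP_k(\B)$, then the closure of the $R$-points contains a dense union of full fibers and hence is all of $V(\B)_{N,k}$. You should either prove that principle directly (it needs only the fiber dimension count, not irreducibility of $V(\B)_{N,k}$) or supply a separate argument handling each irreducible component in the reducible small-$k$ cases.
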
 
\begin{proof}
The assumption that there are infinitely many units
in $R_\beta$ with relative norm $(-1)^k$ is simply the 
statement that the Fermat-Pell rational curve $\FP_{k}(\B)$ has infinitely
many $R$-points by Proposition \ref{fence}  .  
For $\pi_{\FP}:V(\B)_{1,k}\rightarrow \FP_{k}(\B)$ and
an $R$-point $(a,b,c,d)\in \FP_{k}(\B)$, 
$\pi_{\FP}^{-1}(a,b,c,d))\subseteq V(\B )_{1,k}$
is $R$-isomorphic to the variety $\oV_{1,k}(\left[\begin{smallmatrix} a &b\\
c & d\end{smallmatrix}\right]t^k)\simeq
\oV_{k+1}(\left[\begin{smallmatrix} a &b\\
c & d\end{smallmatrix}\right]    t^k)$ by
Proposition \ref{gadfly} and \eqref{juice}.
Similarly the fiber $\pi_{\FP}^{-1}(a,b,c,d)$
of the Fermat-Pell fibration for $V(\B)_{0,k}$
is $\oV_{k}(\left[\begin{smallmatrix} a &b\\
c & d\end{smallmatrix}\right] t^k)$.

Assume $\HYP_{m}(R)$ for $m\geq 4$; then $\oV_{m}(A)(R)$ is Zariski dense for any
$A\in \SLT(R)$ by Remark
\ref{moules}(\ref{moules2}).
Apply these statements with $A= \left[\begin{smallmatrix} a &b\\
c & d\end{smallmatrix}\right] t^k$ to show that the 
$R$-points are Zariski dense on the fibers of the Fermat-Pell
fibrations under the indicated hypotheses. But now
if we have fibrations
\begin{equation}
\label{wren}
\pi_{\FP}:V(\B)_{1,k}\rightarrow \FP_{k}(\B), \quad 
\pi_{\FP}:V(\B)_{0,k}\rightarrow \FP_{k}(\B)
\end{equation}
such that the set of fibers whose $R$-points are Zariski dense is a
Zariski dense subset of the base, then the $R$-points 
are Zariski dense on the total space, thereby proving Theorem
\ref{main}(\ref{main1}).

To prove \eqref{main2}, note that $\HYP_{m}(R)$ is true for $m\geq 9$ by
\cite{mrs}*{Thm.~1.1}. To prove \eqref{main3} assuming the Generalized Riemann
Hypothesis \cite{jz1}*{Sect.~3D}, note that by
\cite{jz1}*{Thm.~1.3}, $\HYP_m(R)$ is true 
 for $m\geq 5$ if $S$ contains a real archimedean prime, 
for $m\geq 6$ if $S$
contains a finite prime, and for $m\geq 7$ in general.
\end{proof}
\begin{remark}
{\rm
If $S$ contains a real archimedean prime or a finite prime, then
under the assumptions of Theorem \ref{main} one can conclude by
\cite{jz}*{Thm.~1.3(b)}
that $V(\B)_{1,k}(R)$ is Zariski dense for $k\geq 7$
using  and $V(\B)_{0,k}(R)$ is Zariski dense
for $k\geq 8$.
}
\end{remark}
We also record a theorem on the geometry of PCF varieties:
\begin{thm}
\label{queen}
\begin{enumerate}[\upshape (a)]
\item
\label{queen1}
Suppose $k\geq 5$.  Then
$V(\B)_{1,k}$ is an irreducible rational variety
 over the
field $K$. 
\item
\label{queen2}
Suppose $k\geq 6$.  Then $V(\B)_{0,k}$ is an irreducible
rational variety over the field $K$.
\end{enumerate}
\end{thm}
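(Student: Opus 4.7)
The plan is to exploit the Fermat--Pell fibration \(\pi_{\FP}\colon V(\B)_{N,k}\to\FP_k(\B)\) of Section~\ref{radio} and reduce the geometry of \(V(\B)_{N,k}\) to that of its base and its generic fiber. By Proposition~\ref{gadfly}, the fiber of \(\pi_{\FP}\) over a point \((a,b,c,d)\in\FP_k(\B)\) is isomorphic to \(\oV_{N,k}\bigl(\left[\begin{smallmatrix}a & b\\c & d\end{smallmatrix}\right]t^k\bigr)\); in the \(N=1\) case the linear isomorphism~\eqref{juice} further identifies this with \(\oV_{k+1}\bigl(\left[\begin{smallmatrix}a & b\\c & d\end{smallmatrix}\right]t^k\bigr)\). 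Passing to the generic point of \(\FP_k(\B)\), the generic fiber is \(\oV_m(A_\eta t^k)\) over the function field \(K(\FP_k(\B))\), where \(m=k+1\) in part~\eqref{queen1} and \(m=k\) in part~\eqref{queen2}, and \(A_\eta\in\SLT(K(\FP_k(\B)))\) is the tautological matrix.

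Next I would invoke the geometric results of~\cite{jz} on matrix-factorization varieties, which show that \(\oV_m(A)\) is geometrically integral and rational over the field of definition of \(A\) once \(m\geq 6\). The hypotheses \(k\geq 5\) in~\eqref{queen1} and \(k\geq 6\) in~\eqref{queen2} both force \(m\geq 6\), so the generic fiber is geometrically integral and rational over \(K(\FP_k(\B))\). Combined with the fact that \(\FP_k(\B)\) is a smooth conic which is \(K\)-rational in the contexts of the theorem (an explicit \(K\)-point is available, e.g.\ from the identity matrix in the even-\(k\) situation or from an element of \(R_\beta^{\times}\) of appropriate norm in the odd-\(k\) situation), this yields that \(V(\B)_{N,k}\) is geometrically integral and \(K\)-birational to \(\FP_k(\B)\times_K\A^d\simeq\P^1_K\times_K\A^d\), and hence \(K\)-birational to \(\P^{d+1}_K\)---that is, a \(K\)-rational irreducible variety. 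The dimensions match: \(d=k-2\) in part~\eqref{queen1} and \(d=k-3\) in part~\eqref{queen2}, consistent with \(\dim V(\B)_{N,k}=N+k-2\).

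The main obstacle will be verifying that the rational parametrization of \(\oV_m(A)\) produced in~\cite{jz} descends to a parametrization defined over \(K(\FP_k(\B))\), so that the birational trivialization of the generic fiber genuinely witnesses \(K\)-rationality rather than just \(\bar K\)-rationality. This should follow because the constructions in~\cite{jz} proceed by explicit rational operations on the matrix entries, and hence the birational parametrization is defined over any field containing those entries---in particular over \(K(\FP_k(\B))\). A secondary point is to ensure \(K\)-rationality of the base conic \(\FP_k(\B)\), which requires only a \(K\)-rational point; this must be addressed via an explicit construction or by appealing to the \(K\)-PCF expansions of \(\beta\) available in the settings of the theorem.
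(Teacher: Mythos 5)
Your proposal follows the paper's strategy exactly: fiber $V(\B)_{N,k}$ over $\FP_k(\B)$ via $\pi_{\FP}$, identify the fibers with matrix-factorization varieties $\oV_m\bigl(\left[\begin{smallmatrix}a&b\\c&d\end{smallmatrix}\right]t^k\bigr)$ (with $m=k+1$ for $N=1$ via \eqref{juice} and $m=k$ for $N=0$), and invoke the rationality results of \cite{jz} for $m\geq 6$; your thresholds and dimension counts agree with the paper's. Two remarks. First, the descent question you single out as the main obstacle is moot in the paper's treatment: the birational equivalence supplied by \cite{jz}*{Thm.~3.3(d)} is the coordinate projection $g_k:(x_1,\ldots,x_{k+1})\mapsto(x_4,\ldots,x_{k+1})$, which is defined over $K$ uniformly in the matrix $A$, so $(g_k,\pi_{\FP}):V(\B)_{1,k}\to\A^{k-2}\times\FP_k(\B)$ is directly a $K$-birational equivalence with no need to pass to the generic fiber or to worry about fields of definition. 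Second, the ``secondary point'' you raise---a $K$-point on the base conic---is a genuine issue, but your proposed resolution does not go through under the theorem's hypotheses: for odd $k$ a $K$-point of $\FP_k(\B)$ amounts to an element of $K(\beta)^\times$ of norm $(-1)^k=-1$, and no such element need exist (e.g.\ $K=\Q$, $\beta=\sqrt{3}$, where $y^2-3x^2=-1$ has no rational points), while Theorem \ref{queen} assumes nothing about units or norms. The paper's own proof stops at the birational equivalence with $\A^{k-2}\times\FP_k(\B)$ and is silent on this; so either ``rational over $K$'' is to be read as ``irreducible over $K$ and geometrically rational,'' in which case both arguments are complete as they stand, or the strict reading requires the additional hypothesis that $(-1)^k$ is a norm from $K(\beta)$. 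You were right to flag the point, but you should not claim the needed $K$-point is ``available'' without such a hypothesis.
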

\begin{proof}
If $k\geq 5$, we have by Proposition \ref{gadfly} 
and \cite[Thm.~3.3(d)]{jz} that the fiber 
$\pi_{\FP}^{-1}(a,b,c,d)$ of the Fermat-Pell fibration
$\pi_{\FP}:V(\B)_{1,k}\rightarrow \FP_{k}(\B)$
is rational with a birational equivalence $g_k$ to 
$\A^{k-2}$ given by
$(x_1,\ldots,x_{k+1})\mapsto(x_4,\ldots,x_{k+1})$. 
Hence
\[ (g_k,\pi_{\FP}):V(\B)_{1,k}\rightarrow \A^{k-2}\times \FP_k(\B)
\]
is a birational
equivalence.  

The same argument applies to $V(\B)_{0,k}$ for $k\geq 6$.
\end{proof}
\begin{remark}
\label{pizza}
{\rm
\begin{enumerate}[\upshape (a)]
\item
The possibilities for $V(\B)_{1,k}$ for $1\leq k\leq 4$ are as follows:
\begin{enumerate}[\upshape (i)]
\item
$V(\B)_{1,1}$ is generically $2$ points and otherwise
empty by \cite[Sect.~5.3]{bej}.
\item
$V(\B)_{1,2}$ is generically the union of 
$2$ irreducible rational curves and otherwise $1$ irreducible rational curve, 
the union of $3$ irreducible rational curves,
or an irreducible rational surface by  \cite[Sect.~8]{bej}.
\item
$V(\B)_{1,3}$ is an irreducible
 rational surface by the same proof as Theorem \ref{queen},
with an additional argument that the generic fiber can't be the reducible
Case II in \cite[Thm. 3.3(b)]{jz}.
\item
$V(\B)_{1,4}$ is either an irreducible rational threefold or the union of two 
irreducible rational threefolds.  Again the proof follows Theorem \ref{queen}
if the fiber of $\pi_{\FP}$ is generically irreducible;
here $V(\B)_{1,4}$ is a rational threefold.
However, we can have $\B$ with  associated $Q(x)=Ax^2+Bx+C$
such that the generic fiber is
in the reducible Case III of \cite[Thm.~3.3(c)]{jz},
in which case $V(\B)_{1,4}$ is the union of two irreducible rational threefolds.
\end{enumerate}
\item
The possibilities for $V(\B)_{0,k}$ 
for $1\leq k \leq 5$ are as follows:
\begin{enumerate}
\item
$V(\B)_{0,1}$ is generically empty and otherwise one point
by \cite[Sect.~5.1]{bej}. 
\item
$V(\B)_{0,2}$ is generically $2$ points and otherwise one point or
one line by \cite[Sect.~5.2]{bej}.
\item
 The equations for $V(\B)_{0,3}$ 
are given in \cite[(20)]{bej}.  Generically
$V(\B)_{0,3}$ is an irreducible rational curve \cite[Sect.~6]{bej}.
Examination of the degenerate cases gives that otherwise
$V(\B)_{0,3}$ is either the union of $2$ irreducible rational curves or 
the union of 
$3$ irreducible rational curves.
\item
$V(\B)_{0,4}$ either an irreducible  rational surface or the union of two
irreducible rational surfaces. If the generic fiber of $\pi_{\FP}$ is not
in Case II of \cite[Thm.~3,3(b)]{jz}, then it is an irreducible rational curve
and $V(\B)_{0,4}$ is an irreduble 
 rational surface as in Theorem \ref{queen}.  If the 
generic fiber of $\pi_{\FP}$ is in Case II of \cite[Thm.~3.3(b)]{jz},
then it is a union of two irreducible rational curves and $V(\B)_{0,4}$ is the
union of two irreducible rational surfaces.
\item
$V(\B)_{0,5}$ is an irreducible  rational threefold following the proof of
Theorem \ref{queen} after arguing that the generic fiber of $\pi_{\FP}$
 cannot
be the reducible Case III of \cite[Thm.~3.3(c)]{jz}.
\end{enumerate}
\end{enumerate}
}
\end{remark}
\noindent Note that not all the varieties $V(\B)_{N,k}$ are rational:
for example it is shown in \cite{bej}*{Sect.~7} that 
$V(\B)_{2,1}$ is generically a curve of genus $1$.

\bibliographystyle{plain}
\bibliography{ZCa-adam}%../local,outside
\end{document}